\date{}
\theoremstyle{plain}
\newtheorem{theorem}{Theorem}
\newtheorem{corollary}{Corollary}
\newtheorem{lemma}{Lemma}
\newtheorem*{question}{Question}
\theoremstyle{definition}
\theoremstyle{remark}
\def\N{{\mathbb N}}
\def\Z{{\mathbb Z}}
\def\R{{\mathbb R}}
\title{On the stable 4-genus of knots with indefinite Seifert form} 
\author{Sebastian Baader}
\begin{document}

\begin{abstract} Under a simple assumption on Seifert surfaces, we characterise knots whose stable topological 4-genus coincides with the genus.
\end{abstract}

\maketitle

\section{Introduction}

The topological 4-genus $g_4(K)$ of a knot $K$ is the minimal genus of a topological, locally flat surface embedded in the 4-ball with boundary~$K$. A well-known theorem due to Freedman asserts that knots with trivial Alexander polynomial bound a locally flat disc in the 4-ball~\cite{F}. Unlike for the classical genus $g$, there is no known algorithm that determines the topological 4-genus of a knot. The signature bound by Kauffman and Taylor~\cite{KT}, $|\sigma(K)| \leq 2g_4(K)$, fails to be sharp for the simplest knots, such as the figure-eight knot. As we will see, the signature bound becomes much more effective when the topological 4-genus is replaced by its stable version $\widehat{g}_4$ defined by Livingston~\cite{Li}:
$$ \widehat{g}_4(K)=\lim_{n \to \infty} \frac{1}{n} g_4(K^n).$$
Here $K^n$ denotes the $n$-times iterated connected sum of $K$. The existence of $\widehat{g}_4$ follows from general principles on subadditive functions (see Theorem~1 in~\cite{Li}).

\begin{theorem} Let $\Sigma \subset \R^3$ be a minimal genus Seifert surface for a knot~$K$. Assume that $\Sigma$ contains an embedded annulus with framing $+1$ or $-1$. Then the following are equivalent:

\begin{enumerate}
\item[(i)] $\widehat{g}_4(K)=g(K)$,

\smallskip
\item[(ii)] $|\sigma(K)|=2g(K)$.
\end{enumerate}
\end{theorem}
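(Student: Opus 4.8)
The plan is to prove the two implications separately, noting first that condition (ii) is exactly the statement that the symmetrised Seifert form $V+V^{T}$---which is nondegenerate of rank $2g(K)$ with signature $\sigma(K)$---is (positive or negative) \emph{definite}; equivalently, $|\sigma(K)|<2g(K)$ means this form is \emph{indefinite}. I will also use throughout that $\sigma$ and $g$ are additive under connected sum, that $g_{4}\le g$, and hence that $\widehat{g}_{4}(K)\le g(K)$ always.

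For (ii)$\Rightarrow$(i) no use of the annulus is needed. The Kauffman--Taylor bound applied to $K^{n}$, together with additivity of the signature, gives
\[
2g_{4}(K^{n})\ge |\sigma(K^{n})| = n\,|\sigma(K)| = 2n\,g(K),
\]
so $\tfrac{1}{n}g_{4}(K^{n})\ge g(K)$ for every $n$, whence $\widehat{g}_{4}(K)\ge g(K)$. Combined with $\widehat{g}_{4}(K)\le g(K)$ this forces equality.

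The substance is (i)$\Rightarrow$(ii), which I will prove in contrapositive form: if $V+V^{T}$ is indefinite then $\widehat{g}_{4}(K)<g(K)$. Since $\widehat g_4$ is the infimum of $\tfrac1n g_4(K^n)$, it suffices to produce a single $N$ and a locally flat surface in $B^{4}$ bounded by $K^{N}$ of genus at most $Ng(K)-1$; then $\widehat{g}_{4}(K)\le g(K)-\tfrac1N<g(K)$. The idea is to take the standard genus $Ng$ surface $\Sigma^{N}=\natural^{N}\Sigma$ for $K^{N}$, push it into $B^{4}$, and compress it once along a curve bounding a locally flat disc. The annulus supplies a simple closed curve $\gamma\subset\Sigma$ with $V(\gamma,\gamma)=\varepsilon\in\{+1,-1\}$; indefiniteness supplies a class $w$ with $V(w,w)$ of the opposite sign, say $V(w,w)=-\varepsilon m$ with $m\ge 1$. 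As the summands of a connected sum are orthogonal for the Seifert form, the class
\[
c=\gamma_{1}+\cdots+\gamma_{m}+w_{m+1}\in H_{1}(\Sigma^{m+1})
\]
is primitive, hence represented by a nonseparating simple closed curve, and satisfies $V(c,c)=m\varepsilon-\varepsilon m=0$, i.e.\ it has framing $0$. Surgering $\Sigma^{m+1}$ along a locally flat disc bounded by $c$ then reduces the genus by one, giving $g_{4}(K^{m+1})\le (m+1)g(K)-1$ and the conclusion with $N=m+1$.

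The crux, and the step I expect to be the main obstacle, is to guarantee that $c$ actually bounds a locally flat disc in $B^{4}$ meeting $\Sigma^{m+1}$ only along $c$, so that the ambient compression is legitimate. The framing condition $V(c,c)=0$ is precisely what makes the surgered surface embedded, but one must also control the \emph{knot type} of $c$: by Freedman's disc theorem it is enough that $c$ have trivial Alexander polynomial, and the band sum realising $c$ has Alexander polynomial the product of those of the cores of $\gamma$ and $w$. This is where the geometric hypothesis that $\Sigma$ contains an \emph{embedded} annulus of framing $\pm1$ is essential: it must be exploited to arrange the relevant curves to be unknotted, equivalently Alexander-trivial, after which the disc is produced by Freedman and isotoped beneath the pushed-in surface. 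Making this knot-type control precise---and checking that the complementary class $w$ can likewise be represented by an unknotted curve---is the heart of the argument; once it is in place, the single compression above completes the proof.
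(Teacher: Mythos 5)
Your proof of (ii)$\Rightarrow$(i) is exactly the paper's argument and is fine. For (i)$\Rightarrow$(ii) you have correctly identified the shape of the argument (produce, on some $\Sigma^N$, something Alexander-trivial to cap off with a Freedman disc, dropping the genus by one), and your homological bookkeeping for the $0$-framed class $c=\gamma_1+\cdots+\gamma_m+w_{m+1}$ is sound: it is primitive, hence realised by a nonseparating simple closed curve, and framing depends only on the homology class. But the step you yourself flag as ``the heart of the argument'' is a genuine gap, not a technicality, and the way you propose to fill it does not work. The knot type of a simple closed curve representing a given homology class on a Seifert surface is not determined by that class, so nothing forces your representative of $c$ to be unknotted or Alexander-trivial; and the claim that ``the band sum realising $c$ has Alexander polynomial the product of those of the cores'' is false in general. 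Controlling the knot type of a single curve on the surface is precisely the hard problem, and no amount of massaging the hypothesis that $\Sigma$ contains an embedded $\pm1$-framed annulus will hand you an unknotted $0$-framed curve directly.

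The paper circumvents this entirely, and the device is worth internalising: instead of a curve, it builds an embedded once-punctured \emph{torus} $T\subset\Sigma^N$ and applies Freedman to the boundary knot $L=\partial T$, whose Alexander polynomial is \emph{computable from the $2\times 2$ Seifert matrix of $T$} -- no control of any knot type is needed. Concretely: the core of the $\pm1$-framed annulus $A$ is nonseparating, so there is a dual annulus $D$ meeting $A$ in a square, and $A\cup D$ is a torus with Seifert matrix $\left(\begin{smallmatrix}1 & b\\ c & d\end{smallmatrix}\right)$. A preliminary lemma (using indefiniteness together with the $+1$-framed annulus) shows that powers of $\Sigma$ contain embedded annuli of \emph{every} framing, which lets one band-sum $A$ and $D$ with far-away annuli to normalise the diagonal entries, and twisting $D$ around $A$ adjusts the off-diagonal entry, ending with Seifert matrix $\left(\begin{smallmatrix}0 & b-c\\ 0 & 0\end{smallmatrix}\right)$. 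Then $\Delta_L(t)=(b-c)^2$, and $\Delta_L(1)=1$ forces $b-c=\pm1$, so $\Delta_L\equiv 1$; Freedman caps $L$ with a locally flat disc whose interior lies in the interior of $B^4$, hence is automatically disjoint from $\Sigma^N\setminus T\subset S^3$, giving a genus $Ng(K)-1$ surface for $K^N$. To repair your write-up you would need to replace the single compressing curve by such a subsurface-with-computable-Seifert-form (or supply some other mechanism for producing an Alexander-trivial object), since as it stands the existence of the required locally flat disc is asserted rather than proved.
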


\begin{corollary} Let $\Sigma \subset \R^3$ be a minimal genus Seifert surface for a knot~$K$. If $\Sigma$ contains two embedded annuli with framings $+1$ and $-1$, then $\widehat{g}_4(K)<g(K)$.
\end{corollary}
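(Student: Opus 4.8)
The plan is to combine Theorem~1 with an elementary observation about the symmetrised Seifert form. Note first that the hypothesis of the corollary---that $\Sigma$ carries embedded annuli of framing $+1$ \emph{and} $-1$---is in particular the hypothesis of Theorem~1, since it provides an annulus of framing $+1$. The equivalence (i)$\Leftrightarrow$(ii) is therefore available for $K$. It thus suffices to rule out condition (ii), i.e. to establish $|\sigma(K)|<2g(K)$: as $\widehat{g}_4(K)\le g_4(K)\le g(K)$ always holds, the failure of (i) then forces the strict inequality $\widehat{g}_4(K)<g(K)$ claimed in the corollary.

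Next I would extract the linear-algebra input. Let $V$ be a Seifert matrix of $\Sigma$ taken with respect to a basis of $H_1(\Sigma;\Z)$, and recall that $\sigma(K)$ is the signature of the symmetric form $Q=V+V^{T}$, which is a $2g(K)\times 2g(K)$ matrix since $\Sigma$ has minimal genus. The core curve $a$ of an embedded annulus of framing $f$ satisfies $V([a],[a])=\mathrm{lk}(a^{+},a)=f$, so that $Q([a],[a])=2f$. Writing $a_{+}$ and $a_{-}$ for the cores of the two given annuli, I obtain one class with $Q([a_{+}],[a_{+}])=+2$ and another with $Q([a_{-}],[a_{-}])=-2$. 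In particular both classes are nonzero in homology, since a nullhomologous curve would pair trivially under $Q$.

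The key step is then to conclude indefiniteness of $Q$ and hence the strict signature bound. Because $Q$ takes a strictly positive and a strictly negative value, it is neither positive nor negative semidefinite; writing $p,n,z$ for the numbers of positive, negative and zero eigenvalues, we have $p\ge1$ and $n\ge1$, together with $p+n+z=2g(K)$. Consequently $|\sigma(K)|=|p-n|\le p+n-2\le 2g(K)-2<2g(K)$, so (ii) fails, and the argument of the first paragraph completes the proof.

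I expect the only genuinely delicate point to be bookkeeping rather than substance: confirming that the framing of the annulus equals the self-linking $V([a],[a])$ with the correct sign convention, and checking that the two opposite-sign diagonal values of $Q$ suffice to defeat definiteness with no assumption on the linear independence of $[a_{+}]$ and $[a_{-}]$. Everything beyond this reduces to a direct appeal to Theorem~1.
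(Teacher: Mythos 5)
Your proposal is correct and follows exactly the route the paper intends (the paper states Corollary~1 without writing out a proof, but the argument is precisely this): the annulus of framing $+1$ puts you in the setting of Theorem~1, while the two core curves give classes on which $V+V^{T}$ takes the values $+2$ and $-2$, forcing indefiniteness and hence $|\sigma(K)|\le 2g(K)-2<2g(K)$. Your bookkeeping is also right --- the framing is $V([a],[a])$ by the paper's definition, and no linear independence of the two classes is needed since a symmetric form taking both signs automatically has at least one positive and one negative eigenvalue.
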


The second condition of Theorem~1 clearly implies the first one, by the following chain of (in)equalities:
$$n2g(K)=n|\sigma(K)|=|\sigma(K^n)| \leq 2g_4(K^n) \leq 2g(K^n)=n2g(K).$$
We do not know whether the reverse implication holds without any additional assumption on Seifert surfaces.

\begin{question} Does there exist a knot $K$ with $|\sigma(K)|<2g(K)$ and $\widehat{g}_4(K)=g(K)$?
\end{question}

We conclude the introduction with an application concerning positive braid knots, i.e. knots which are closures of a positive braids. As shown in~\cite{Ba}, the only positive braid knots with $|\sigma(K)|=2g(K)$ are torus knots of type $T(2,n)$ ($n \in \N$), $T(3,4)$ and $T(3,5)$. Moreover, positive braid knots have a canonical Seifert surface (in fact, a fibre surface), which always contains a Hopf band with framing $+1$.

\begin{corollary} Let $K$ be a positive braid knot. Then $\widehat{g}_4(K)=g(K)$, if and only if $K$ is a torus knot of type $T(2,n)$ $(n \in \N)$, $T(3,4)$ or $T(3,5)$.
\end{corollary}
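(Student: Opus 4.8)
The plan is to obtain this final corollary as an immediate consequence of Theorem~1 combined with the signature classification of positive braid knots recalled above. The first step is to check that every positive braid knot $K$ falls under the hypotheses of Theorem~1. Its canonical Seifert surface $\Sigma$ is a fibre surface and therefore realises the Seifert genus, so it is a minimal genus Seifert surface in the sense of the theorem. As noted in the introduction, $\Sigma$ always contains a Hopf band of framing $+1$, which is exactly an embedded annulus of framing $+1$. Hence the standing assumption of Theorem~1 is satisfied for all positive braid knots.

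With this in place, Theorem~1 gives, for every positive braid knot $K$, the equivalence of $\widehat{g}_4(K)=g(K)$ with $|\sigma(K)|=2g(K)$. The task then reduces to determining which positive braid knots satisfy the signature equality $|\sigma(K)|=2g(K)$. This is precisely the theorem from \cite{Ba} quoted in the introduction: the only positive braid knots with $|\sigma(K)|=2g(K)$ are the torus knots $T(2,n)$ for $n \in \N$, $T(3,4)$ and $T(3,5)$. Chaining the two equivalences yields the stated characterisation of positive braid knots with $\widehat{g}_4(K)=g(K)$.

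Since Theorem~1 and the classification of \cite{Ba} do all the work, no genuinely hard step remains; the only matters requiring attention are bookkeeping. I would be careful to justify that a fibre surface realises the knot genus (so that ``minimal genus'' is legitimate and $g(K)$ is unambiguous) and that the framing-$+1$ Hopf band really is an embedded annulus whose surface framing equals $+1$, matching the hypothesis of Theorem~1 on the nose. It is also worth remarking that the three listed families are themselves positive braid knots, so that the backward direction of the corollary is not vacuous.
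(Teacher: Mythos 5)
Your argument is correct and is exactly the one the paper intends: apply Theorem~1 to the canonical fibre surface (which is minimal genus and contains a $+1$-framed Hopf band) and then invoke the classification from~\cite{Ba} of positive braid knots with $|\sigma(K)|=2g(K)$. The bookkeeping points you flag are the right ones, and nothing further is needed.
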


\section*{Acknowledgements} I would like to thank Livio Liechti for fruitful discussions, in particular for helping me get the assumption of Theorem~1 right.

\section{Constructing tori with slice boundary}

Let $K \subset S^3$ be a knot with minimal genus Seifert surface $\Sigma$. The Seifert form $V: H_1(\Sigma, \Z) \times H_1(\Sigma, \Z) \to \Z$ is defined by linear extension of the formula
$$V([x],[y])=\text{lk}(x,y^+),$$
valid for simple closed curves $x,y \subset \Sigma$. Here lk denotes the linking number and $y^+$ is a push-off of the curve $y$ in the positive direction with respect to a fixed orientation of $\Sigma$. The number $V([x],[x]) \in \Z$ is called framing of the curve $x$. The signature $\sigma(K)$ of $K$ is defined as the number of positive eigenvalues minus the number of negative eigenvalues of the symmetrised Seifert form $V+V^T$. The Alexander polynomial of $K$ is defined as $\Delta_K(t)=\det(\sqrt{t}V-\frac{1}{\sqrt{t}}V^T)$. Throughout this section, we will assume that
\begin{enumerate}
\item[(i)] the symmetrised Seifert form on $H_1(\Sigma, \Z)$ is indefinite, i.e. $$|\sigma(K)|<2g(K),$$
\item[(ii)] the surface $\Sigma$ contains an embedded annulus $A$ with framing $+1$ (the case of framing $-1$ can be reduced to this by taking the mirror image of $\Sigma$).
\end{enumerate}
Let $\Sigma^n$ be the Seifert surface for $K^n$ obtained by $n$-times iterated boundary connected sum of $\Sigma$. We define

\smallskip
\noindent
$\mathcal{F}(\Sigma)=\{m \in \Z \, |$ there exist a number $n \in \N$ and an embedded annulus $A \subset \Sigma^n$ with framing $m\}$.

\begin{lemma} $\mathcal{F}(\Sigma)=\Z$.
\end{lemma}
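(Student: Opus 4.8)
The plan is to realise every integer as the framing of an embedded annulus in some $\Sigma^n$ by band-summing copies of two basic annuli: the given annulus $A$ of framing $+1$, and an annulus of strictly negative framing extracted from the indefiniteness hypothesis. The point that makes everything work is that the framing of an embedded annulus in a Seifert surface depends only on the homology class of its core: if $x$ is the core, the framing equals $V([x],[x])$, a function of $[x] \in H_1(\Sigma^n, \Z)$ alone.

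First I would record the behaviour of framings under the two operations I need. Since $\Sigma^n$ is the $n$-fold boundary connected sum of $\Sigma$, its first homology splits as $H_1(\Sigma^n, \Z) = H_1(\Sigma, \Z)^{\oplus n}$ and the Seifert form is the orthogonal direct sum $V^{\oplus n}$; in particular, curves lying in distinct summands have vanishing mutual linking. Next, given finitely many pairwise disjoint simple closed curves on the connected surface $\Sigma^n$, one can join them by disjoint embedded bands into a single embedded simple closed curve $z$ with $[z] = \sum_i \pm [c_i]$. Because the cross terms $V([c_i],[c_j])$ with $i \neq j$ vanish whenever $c_i$ and $c_j$ sit in different connected-sum factors, the framing of the band-sum is simply $V([z],[z]) = \sum_i V([c_i],[c_i])$, independently of the orientation signs.

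With these tools the positive integers come for free: taking $k$ disjoint copies of $A$, one in each factor of $\Sigma^k$, and banding them together yields an embedded annulus of framing $k$. For the negative side I would invoke indefiniteness. Since $(V+V^T)(v,v) = 2V(v,v)$, condition (i) says precisely that $V(v,v)$ takes a negative value on some $w \in H_1(\Sigma, \Z)$. Writing $w = d\,w'$ with $w'$ primitive gives $V(w',w') = d^{-2} V(w,w) < 0$, and a primitive class on a surface is represented by a non-separating simple closed curve; a neighbourhood of that curve is an embedded annulus in $\Sigma$ of some framing $-k$ with $k \geq 1$. Banding $a$ copies of $A$ together with $b$ copies of this negative annulus inside $\Sigma^{a+b}$ then realises the framing $a - bk$. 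Finally, for an arbitrary target $m \in \Z$ I would pick $b \geq 0$ large enough that $a := m + bk \geq 0$ (with $a + b \geq 1$), so that $a - bk = m$ is attained, giving $\mathcal{F}(\Sigma) = \Z$.

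The main obstacle I anticipate is not the arithmetic but the two geometric realisation steps underlying it: upgrading a homology class of negative self-linking to one carried by an embedded simple closed curve (handled by the reduction to a primitive class), and confirming that band-summing genuinely preserves additivity of framings. The latter is exactly where it matters that the pieces lie in different connected-sum factors, so that the bands create no new linking and the orientation ambiguity in $[z] = \sum_i \pm [c_i]$ is invisible to the quadratic form $v \mapsto V(v,v)$.
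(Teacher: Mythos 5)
Your proposal is correct and follows essentially the same route as the paper: extract a negatively framed simple closed curve from the indefiniteness of $V+V^T$, place copies of it and of $A$ in distinct factors of the boundary connected sum $\Sigma^n$, and band-sum them so that framings add (the cross terms vanishing because the pieces lie in different factors). The only cosmetic difference is how the negative curve is produced --- you pass to a primitive class, while the paper invokes the open cone of negative vectors and the splitting of $\Sigma$ into tori --- and your explicit bookkeeping $a-bk$ spells out the paper's closing remark that arbitrary framings are obtained ``in the same way.''
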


\begin{proof} We first show that $\Sigma$ contains an embedded annulus with negative framing. The symmetrised Seifert form $q=V+V^T$ being indefinite and non-degenerate (the latter is true for all Seifert surfaces with one boundary component), there exists a vector $\alpha \in H_1(\Sigma,\R)$ with $q(\alpha)<0$. Since negative vectors for $q$ form an open cone in $H_1(\Sigma,\R)$, there exists a simple closed curve $c \subset \Sigma$ with negative framing, i.e. $q([c])<0$. Indeed, the surface $\Sigma$ can be seen as a boundary connected sum of $g(\Sigma)$ tori; a suitable connected sum of torus knots will do.

Let $n=|q([c])|$ be the absolute value of the framing of the annulus $C \subset \Sigma$ defined by the curve $c$. We claim that $\Sigma^n$ contains an embedded annulus with framing $-1$. This can be seen by taking a split union of $C$ and $n-1$ copies of $A$ in $\Sigma^n$ (one annulus per factor), and constructing an annulus that runs through all of these once. Here we need to choose $n-1$ disjoint intervals connecting pairs of successive annuli, along which the new annulus will run back and forth, as sketched in Figure~1. In the same way, we may construct annuli with arbitrary framings.
\begin{figure}[ht]
\scalebox{1.0}{\raisebox{-0pt}{$\vcenter{\hbox{\epsffile{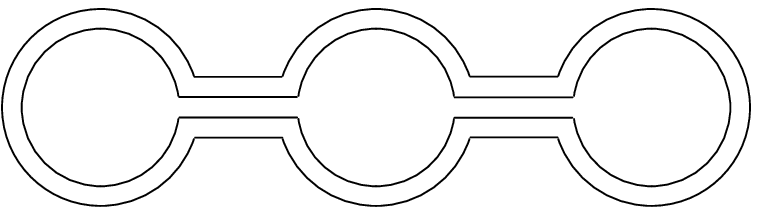}}}$}}
\caption{}
\end{figure}
\end{proof}

\begin{lemma} There exists a number $N \in \N$ and an embedded torus $T \subset \Sigma^N$ with one boundary component whose Seifert form is
$\begin{pmatrix}
0 & \pm 1 \\
0 & 0
\end{pmatrix}$, with respect to a suitable basis of $H_1(T,\Z)$. In particular, the boundary knot $L=\partial T$ has trivial Alexander polynomial.
\end{lemma}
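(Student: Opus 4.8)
The plan is to realise the desired torus as the plumbing (Murasugi sum along a single square) of two embedded annuli of framing $0$, both living inside $\Sigma^{N}$ for a suitable $N$. Recall that the plumbing of an annulus of framing $p$ with an annulus of framing $q$ along one square is a once-punctured torus whose Seifert form, in the basis given by the two cores, is $\begin{pmatrix} p & 1 \\ 0 & q \end{pmatrix}$: the diagonal entries are the framings, and the off-diagonal pair $\{0,1\}$ records the single transverse intersection of the cores (one core passing over the other). Taking $p=q=0$ then produces exactly $\begin{pmatrix} 0 & 1 \\ 0 & 0 \end{pmatrix}$, and a short computation gives $\Delta_L(t)=\det\left(\sqrt{t}\,V-\tfrac{1}{\sqrt{t}}V^{T}\right)=\det\begin{pmatrix} 0 & \sqrt{t} \\ -1/\sqrt{t} & 0\end{pmatrix}=1$, so $L=\partial T$ has trivial Alexander polynomial. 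The $-1$ case comes from mirroring.

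First I would produce the two framing-$0$ annuli. Since Lemma~1 gives $\mathcal{F}(\Sigma)=\Z$, in particular $0\in\mathcal{F}(\Sigma)$, so some $\Sigma^{n}$ already contains an embedded annulus of framing $0$; concretely, tubing the given framing-$(+1)$ annulus $A$ to a framing-$(-1)$ annulus produced as in Lemma~1 with an untwisted band yields framing $0$, by the same additivity of framings used there. Carrying out this construction in two disjoint blocks of connected summands gives two framing-$0$ annuli $\alpha,\beta\subset\Sigma^{N}$.

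The heart of the argument, and the step I expect to be the main obstacle, is to arrange these two annuli so that they plumb, i.e. so that $\alpha\cup\beta$ together with a single connecting square is an embedded genus-one subsurface of $\Sigma^{N}$. Rather than fixing $\alpha,\beta$ first and then trying to join them, I would build the plumbing square into the construction from the start: as in Lemma~1 each annulus is a band running once through a split collection of framing-$\pm1$ base annuli, and the connecting bands are flexible and may be routed through a trivial product region of $\Sigma^{N}$ (for instance near one of the boundary connected-sum disks). There I insert a single transverse crossing of the two bands in the standard plumbing model. This crossing is the local model of a Murasugi sum: it leaves both self-framings unchanged (still $0$) and contributes the off-diagonal pair $\{0,1\}$, while keeping the whole configuration embedded.

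Finally I would verify that the resulting subsurface $T$ is indeed a once-punctured torus with Seifert matrix $\begin{pmatrix} 0 & \pm 1 \\ 0 & 0\end{pmatrix}$ in the basis of its two cores: the diagonal entries are the two vanishing framings, while the unimodular intersection form $V-V^{T}$ of a genus-one surface forces the off-diagonal difference to be $\pm 1$, the plumbing guaranteeing that the entries are $0$ and $\pm 1$ rather than some larger pair. The Alexander polynomial computation above then concludes. The delicate point throughout is the simultaneous control of the framings (which must cancel to $0$) and of the embeddedness of the plumbing inside $\Sigma^{N}$, both resting on the flexibility of the connecting bands established in Lemma~1.
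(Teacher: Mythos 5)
There is a genuine gap at the very step you flag as the main obstacle, and it is not a technical difficulty but a homological impossibility. Your two framing-$0$ annuli $\alpha,\beta$ are built in disjoint blocks of connected summands of $\Sigma^{N}$, so their core curves represent classes lying in orthogonal summands of $H_1(\Sigma^{N},\Z)$ with respect to the intersection form; their algebraic intersection number is $0$. Rerouting the connecting bands through a product region is a homotopy of the cores and cannot change this number, whereas two simple closed curves on an oriented surface meeting transversely in exactly one point have intersection number $\pm 1$. Hence the single plumbing square can never be inserted between curves constructed in disjoint blocks. The missing idea is to find the second annulus on $\Sigma$ itself, dual to the given one: since the core $a$ of the framing-$(+1)$ annulus $A$ has nonzero framing, it cannot bound a subsurface, so it is non-separating and admits an embedded annulus $D\subset\Sigma$ meeting $A$ in a single square. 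This is how the paper obtains the genus-one subsurface; only afterwards does it normalise the framings by band-summing with annuli placed in far-away summands, which does not disturb the single intersection.

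A second, smaller gap: even granting a single square of intersection, the off-diagonal Seifert entries are some pair $(b,c)$ with $b-c=\pm1$ determined by the global linking of the two cores in $S^3$, not by the local picture near the square; you cannot assume ``standard plumbing position'' for subsurfaces of a prescribed Seifert surface. For $V=\left(\begin{smallmatrix}0 & b\\ c & 0\end{smallmatrix}\right)$ one computes $\Delta_L(t)=-bc\,t+b^2+c^2-bc\,t^{-1}$, which is trivial only when $bc=0$, so you genuinely need one entry to vanish. The paper achieves this by an explicit base change, replacing $D$ by an annulus making $-c$ additional twists around $A$, which kills the lower-left entry; your appeal to the plumbing model does not supply this step.
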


\begin{proof} By the second assumption, $\Sigma$ contains an embedded annulus $A$ with framing $+1$. We claim that the core curve~$a$ of the annulus~$A$ is non-separating. Indeed, if the curve~$a$ was separating, it would bound a surface on one side (since the boundary of $\Sigma$ is connected), so the framing of $A$ would be zero.
As a consequence of the non-separation property of~$a$, there exists an embedded annulus $D \subset \Sigma$ which intersects $A$ in a square. The union of $A$ and $D$ is an embedded torus $T \subset \Sigma$ with one boundary component. Let
$\begin{pmatrix}
1 & b \\
c & d
\end{pmatrix}$
be the matrix representing the Seifert form on $H_1(T,\Z)$ with respect to a pair of oriented core curves of $A$ and $D$. By adding a suitable number of copies of $A$ or $B$ to $D$ in a power $\Sigma^n$, far away from the initial annulus $A \subset \Sigma$, we may impose the framing of $D$ to be $-1$, without changing its linking number with the annulus $A$. Thus we obtain an embedded torus $T' \subset \Sigma^n$ with Seifert form
$\begin{pmatrix}
1 & b \\
c & -1
\end{pmatrix}$.
An elementary base change yields
$$\begin{pmatrix}
1 & 0 \\
-c & 1
\end{pmatrix}
\begin{pmatrix}
1 & b \\
c & -1
\end{pmatrix}
\begin{pmatrix}
1 & -c \\
0 & 1
\end{pmatrix}=
\begin{pmatrix}
1 & b-c \\
0 & -bc-1
\end{pmatrix}.$$
In turn, if we replace the annulus $D$ by an annulus with $-c$ additional twists around $A$, we obtain an embedded torus $T'' \subset \Sigma^n$ with Seifert form
$\begin{pmatrix}
1 & b-c \\
0 & -bc-1
\end{pmatrix}$.
As before, we may change the individual framings of $A$ and $D$ to be zero in an even larger power $\Sigma^N$. The resulting torus, which we again denote $T \subset \Sigma^N$, has Seifert form
$V=\begin{pmatrix}
0 & b-c \\
0 & 0
\end{pmatrix}$.
We claim that $b-c=\pm 1$. Indeed, let $L=\partial T$ be the boundary knot of $T$. The Alexander polynomial of $L$ can be computed as
$$\Delta_L(t)=\det(\sqrt{t}V-\frac{1}{\sqrt{t}}V^T)=
\begin{vmatrix}
0 & \sqrt{t}(b-c) \\
-\frac{1}{\sqrt{t}}(b-c) & 0
\end{vmatrix}=(b-c)^2.$$
Since $\Delta_L(1)=1$, for all knots $L$, we conclude $b-c=\pm 1$ and
$$\Delta_L(t)=1.$$ 
\end{proof}

In order to prove Theorem~1, we need to invoke Freedman's result (\cite{F}, see also~\cite{FQ} and~\cite{GT}): knots with trivial Alexander polynomial are topologically slice.

\begin{proof}[Proof of Theorem~1]
As mentioned in the introduction, the condition $|\sigma(K)|=2g(K)$ implies $\widehat{g}_4(K)=g(K)$, without any assumption on the Seifert surface $\Sigma$. For the reverse implication, we assume $|\sigma(K)|<2g(K)$ and prove $\widehat{g}_4(K)<g(K)$. By Lemma~2, there exists a number $N \in \N$ and an embedded torus $T \subset \Sigma^N$ with one boundary component $L=\partial T$ and $\Delta_L(t)=1$. According to Freedman, there exists a topological, locally flat disc $D$ embedded in the 4-ball with boundary~$L$. We may assume that the interior of $D$ is contained in the interior of the 4-ball. Now the union of $D$ and $\Sigma^N \setminus T$ is a topological, locally flat surface embedded in the 4-ball with boundary $K^N$ and genus $Ng(K)-1$. Therefore,
$$\widehat{g}_4(K) \leq g(K)-\frac{1}{N}<g(K).$$ 
\end{proof}

\bigskip
\noindent
Universit\"at Bern, Sidlerstrasse 5, CH-3012 Bern, Switzerland

\bigskip
\noindent
\texttt{sebastian.baader@math.unibe.ch}

\end{document}